\newtheorem{theorem}{Theorem}[section]
\newtheorem{lemma}[theorem]{Lemma}
\newtheorem{corollary}[theorem]{Corollary}
\theoremstyle{definition}
\newtheorem*{case 1*}{Case 1}
\newtheorem*{case 2*}{Case 2}
\begin{document}

\setcounter{page}{1}

\begin{center}
{\LARGE \bf  An extension of Wilson's Theorem.}
\vspace{8mm}

{\large \bf Gaitanas Konstantinos}
\vspace{3mm}

Department of  Applied Mathematical and Physical Sciences\\
 National Technical University of Athens \\ 
Heroon Polytechneiou Str., Zografou Campus, 15780 Athens, Greece \\ 
e-mail: \url{kostasgaitanas@gmail.com}
\vspace{2mm}

\end{center}
\vspace{10mm}

\noindent
{\bf Abstract:} Let $\mathcal{N}[k]$ be the multiset containing the $\binom{n-1}{k}$ products of $k$-subsets of $\{1,\ldots, n-1\}$. We show that if $n\geq (2c+3)^2$, then 
\begin{gather*}\left((-1)^c+\sum_{M\in \mathcal{N}[n-1-c]}M\right)\cdot(c+1)\equiv 0\pmod{n},\end{gather*}
if and only if $n=(c+1)p$, where $p$ is prime. This provides a combinatorial extension of Wilson's Theorem, which is the special case where $c=0$.\\
{\bf Keywords:} Wilson's Theorem, prime numbers.\\
\vspace{10mm}

\section{Introduction} 
The following theorem, known as \emph{Wilson's Theorem} provides probably the oldest and most famous non-trivial primality criterion:
\begin{theorem}\label{theorem} A positive integer $n>1$ is prime if and only if 
\begin{equation}\label{eq1}(n-1)!\equiv-1\pmod{n}.\end{equation}
\end{theorem}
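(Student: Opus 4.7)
The plan is to prove both directions of Wilson's criterion by elementary number-theoretic arguments: the forward direction via an inverse-pairing in the multiplicative group $(\mathbb{Z}/n\mathbb{Z})^{*}$, and the reverse direction by contrapositive. For the forward direction, assume $n$ is prime and work in $(\mathbb{Z}/n\mathbb{Z})^{*} = \{1, 2, \ldots, n-1\}$. Each element has a unique inverse, and the self-inverse elements are precisely the roots of $x^{2} \equiv 1 \pmod{n}$; since $\mathbb{Z}/n\mathbb{Z}$ is a field, $(x-1)(x+1) \equiv 0 \pmod{n}$ forces $x \equiv \pm 1$. Thus $\{2, 3, \ldots, n-2\}$ partitions into inverse pairs $\{a, a^{-1}\}$ with $a \neq a^{-1}$, each pair contributing $1$ modulo $n$ to the product. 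Combining this with the endpoints $1$ and $n-1 \equiv -1 \pmod{n}$, we obtain $(n-1)! \equiv -1 \pmod{n}$.

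For the reverse direction, I would argue the contrapositive: if $n > 1$ is composite, then $n \mid (n-1)!$, and in particular $(n-1)! \not\equiv -1 \pmod{n}$. Write $n = ab$ with $1 < a \leq b < n$. If $a < b$, both $a$ and $b$ appear as distinct factors among $1, 2, \ldots, n-1$, so $n = ab \mid (n-1)!$ directly. If $a = b$, then $n = a^{2}$ with $a \geq 2$; provided $a \geq 3$, both $a$ and $2a$ lie in $\{1, \ldots, n-1\}$ (because $2a < a^{2} = n$) and are distinct, so $2a^{2} \mid (n-1)!$ and again $n \mid (n-1)!$.

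The only delicate point is the exceptional case $n = 4 = 2^{2}$: here $2$ and $2 \cdot 2 = 4$ are not both strictly less than $n$, so the ``two distinct multiples of $a$'' trick fails. This single corner case has to be checked by direct computation, which yields $(4-1)! = 6 \equiv 2 \pmod{4} \neq -1$, confirming the criterion for $n=4$. Beyond isolating this one value, no substantive obstacle remains; the argument is essentially forced once the pairing structure in $(\mathbb{Z}/n\mathbb{Z})^{*}$ and the fact that $x^{2}=1$ has only two roots in a field are in place.
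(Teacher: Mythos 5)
Your proof is correct. The forward direction is identical to the paper's: both pair off the non-self-inverse elements of $(\mathbb{Z}/n\mathbb{Z})^{*}$ and use the fact that $x^{2}\equiv 1$ has only the roots $\pm 1$ in a field. Where you diverge is the composite direction. The paper's argument is shorter: it only observes that a composite $n$ has a divisor $d$ with $1<d\leq n-1$, so $d\mid\gcd\left((n-1)!,n\right)$, whereas $(n-1)!\equiv -1\pmod{n}$ would force $\gcd\left((n-1)!,n\right)=1$. That single observation kills the congruence with no case analysis and no exceptional value. You instead prove the stronger fact that $n\mid (n-1)!$ for every composite $n\neq 4$, which requires splitting $n=ab$ into the cases $a<b$ and $a=b$ and checking $n=4$ by hand. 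Your version buys a genuinely stronger conclusion (the factorial is actually $0$ modulo $n$, not merely $\not\equiv -1$) -- a fact that is sometimes useful in its own right -- at the cost of the extra bookkeeping; the paper's gcd argument buys brevity and uniformity, since it needs only one nontrivial common divisor rather than full divisibility. Both are complete and standard; neither has a gap.
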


\begin{proof}
A straightforward computation shows that the result holds true for $n=2$, so we may assume that $n\geq 3$.\par
If $n$ is prime, then $\mathbb{Z}_n$ is a field. This implies that the numbers $1, 2, \ldots, n-1$ with the exception of $\pm 1$, can be arranged in disjoint pairs $(x, x')$, such that $x\cdot x'\equiv 1\pmod{n}$. Thus, $(n-1)!\equiv -1\pmod{n}$, which shows that \eqref{eq1} holds true if $n$ is prime.\par 
If $n$ is composite, there is an integer $d>1$ such that $d\mid n$. Thus, $\gcd\left((n-1)!, n\right)\geq d$, which implies that $(n-1)!\equiv -1\pmod{n}$ is impossible. Thus, \eqref{eq1} does not hold true for composite numbers. This completes the proof.
\end{proof}

It is not easy to determine who was the first to give a valid proof since there is evidence that the result was known before the Middle Ages, but it was J. L. Lagrange who provided the first \emph{published} proof. For a historical account of Wilson's Theorem, we refer the reader to the well-known book \emph{History of the Theory of Numbers} \cite{Dickson} by Leonard Eugene Dickson.\par

Let $\mathcal{N}=\{1,\ldots, n-1\}$ and $\mathcal{N}[k]$ be the multiset containing the $\binom{n-1}{k}$ products of $k$-subsets of $\mathcal{N}$, that is
\begin{gather*}\mathcal{N}[k]=\{M: M=\prod_{m_{j_1},\ldots,m_{j_k}\in \mathcal{N}} m_{j_1}\cdots m_{j_k}\}.\end{gather*}
Since $\mathcal{N}[n-1]=\{(n-1)!\}$, Wilson's Theorem evidently states that $n>1$ is prime, if and only if 
\begin{gather*}\sum_{M\in \mathcal{N}[n-1]}M\equiv-1\pmod{n}.\end{gather*}
We would like to investigate whether some similar result holds true if we consider other values of $k$ less than $n-1$, namely when $k=n-1-c$. In particular, the primary objective of this paper is to extend Wilson's Theorem through the following question:
\begin{center} \emph{Let $c\geq 1$. What is the behavior of $\displaystyle\sum_{M\in \mathcal{N}[n-1-c]}M\pmod{n}$?}\end{center}
Despite the popularity of Wilson's Theorem, it seems that there has been no effort to extend it in this direction.
\section{Notation and preliminaries}
This section is a quick-reference guide to the notation and background information that will be assumed throughout this paper.\par
Throughout this paper, $p$ will always denote a prime number. The largest integer less than or equal to $x$ (usually called the \emph{floor} of $x$) will be denoted by $\lfloor x\rfloor$. Below we mention some useful properties of the floor function:
\begin{enumerate}
\item $\lfloor x\rfloor>x-1$.
\item If $k\nmid n$, then $\left\lfloor\frac{n-1}{k}\right\rfloor=\left\lfloor\frac{n}{k}\right\rfloor$.
\item\label{item3} $\left\lfloor\frac{n-1}{k}\right\rfloor\geq \frac{n}{k}-1$, for every $n, k\geq 2$.
\end{enumerate}
We briefly prove \eqref{item3}: If $k\mid n$, then $\frac{n}{k}$ is a positive integer and $k\nmid n-1$; thus, $\left\lfloor \frac{n-1}{k}\right\rfloor=\frac{n}{k}-1$. On the other hand, if $k\nmid n$, then (using the first two properties) $\left\lfloor \frac{n-1}{k}\right\rfloor=\left\lfloor \frac{n}{k}\right\rfloor> \frac{n}{k}-1$. In any case, property \eqref{item3} holds true.\par
Let $m\in\mathbb{N}$. We will write $p^a\mid \mid m$, if $a$ is the exponent of the largest power of $p$ that divides $m$. Legendre's formula states that $p^{\nu_p(m!)}\mid \mid m!$, where
\begin{gather*}\nu_p(m!)=\left\lfloor\frac{m}{p}\right\rfloor+\cdots +\left\lfloor\frac{m}{p^x}\right\rfloor, \quad p^x\leq m<p^{x+1}.\end{gather*}
A simple counting argument shows that 
\begin{equation}\label{eq2}\sum_{M\in \mathcal{N}[n-1-c]}M=\sum_{M\in \mathcal{N}[c]}\frac{(n-1)!}{M}.\end{equation}
If we denote by
\begin{gather*}\mathcal{N}_1[c]=\{M\in \mathcal{N}[c]: M=M'\cdot\prod_{\substack{p\mid m\\m<n}}m, \quad M'\in\mathbb{N}\}\end{gather*}
the multiset which contains all $M\in \mathcal{N}[c]$ which are divisible by all multiples of $p$ less than $n$, we may rewrite \eqref{eq2} in the following way:
\begin{gather*}\sum_{M\in \mathcal{N}[n-1-c]}M=\sum_{M\in \mathcal{N}_1[c]}\frac{(n-1)!}{M}+\sum_{M\not\in \mathcal{N}_1[c]}\frac{(n-1)!}{M}.\end{gather*}
Observe that if $M\not\in \mathcal{N}_1[c]$, then $\frac{(n-1)!}{M}\equiv 0\pmod{p}$; this is obvious, since there is at least one multiple of $p$ less than $n$ which divides $(n-1)!$ but not $M$. Hence,  
\begin{equation}\label{eq3}\sum_{M\in \mathcal{N}[n-1-c]}M\equiv\sum_{M\in \mathcal{N}_1[c]}\frac{(n-1)!}{M}\pmod{p}.\end{equation}
Note that this holds true, even if $\mathcal{N}_1[c]$ is empty.\footnote{We adopt the convention that the empty sum is zero.}\par
In closing, we will also make use of the following well-known congruence: 
\begin{equation}\label{eq4}(x-1)\cdots (\left(x-(p-1)\right)\equiv x^{p-1}-1\pmod{p},\quad  \footnote{From \eqref{eq2} we obtain that the constant term of the left-hand side (which is $(p-1)!$) is congruent to $-1\pmod{p}$. This provides an alternate proof of Wilson's Theorem.}\end{equation}
which means that the coefficients of the corresponding powers of $x$ are congruent modulo $p$. A proof of this result can be found in many mathematical contexts (such as \cite{Hardy}).\par
As for other prerequisites, the reader is expected to be familiar only with basic congruence rules and some standard tools from analysis.
\section{Some useful Lemmas}
We present two lemmas which will be useful for the proof of our main result, Theorem\eqref{main}. Both of them are obtained using only basic analysis.
\begin{lemma}\label{theorem1}
For every $n\geq 25$, the following inequality holds:
\begin{gather*}\frac{15n}{16}>\frac{(\sqrt{n}+1)\ln n}{2\ln 2}.\end{gather*} 
\begin{proof}
Let $f(x)=\frac{x}{\sqrt{x}+1}-\frac{8\ln x}{15\ln 2}, x\geq 25$. It's derivative is equal to 
\begin{gather*}f'(x)=\frac{\frac{\sqrt{x}}{2}+1}{(\sqrt{x}+1)^2}-\frac{8}{15\ln 2\cdot x}.\end{gather*}
But $8<15\ln 2$, which implies that $-\frac{8}{15\ln 2\cdot x}>-\frac{1}{x}$. Thus, 
\begin{gather*}f'(x)>\frac{\frac{\sqrt{x}}{2}+1}{(\sqrt{x}+1)^2}-\frac{1}{x}=\frac{\sqrt{x}(x-4)-2}{2x(\sqrt{x}+1)^2}.\end{gather*}
The denominator is always positive and the numerator is clearly positive since by assumption, $x\geq 25$. This implies that $f(x)$ is strictly increasing, thus, if $n\geq 25$, then $f(n)\geq f(25)>0.9$. This implies that $f(n)>0$, hence 
\begin{gather*}\frac{n}{\sqrt{n}+1}>\frac{8\ln n}{15\ln 2}\Leftrightarrow \frac{15n}{16}>\frac{(\sqrt{n}+1)\ln n}{2\ln 2}.\end{gather*} 
\end{proof}
\end{lemma}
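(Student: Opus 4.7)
The plan is to reduce the stated inequality to showing positivity of a single auxiliary function of one real variable on $[25,\infty)$, and then use the standard ``monotonicity plus base case'' routine. Concretely, since $\sqrt{n}+1>0$, the inequality is equivalent (after multiplying both sides by $\frac{16}{15(\sqrt{n}+1)}$) to
\begin{gather*}h(n):=\frac{n}{\sqrt{n}+1}-\frac{8\ln n}{15\ln 2}>0.\end{gather*}
This reformulation is the key choice: it cleanly separates the $\sqrt{n}$-type growth from the $\log$-type growth, whereas differentiating the original difference $\frac{15n}{16}-\frac{(\sqrt{n}+1)\ln n}{2\ln 2}$ mixes the two rates and produces an unwieldy derivative.

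Next I would differentiate $h$. The quotient rule gives $\frac{d}{dn}\!\left(\frac{n}{\sqrt{n}+1}\right)=\frac{\sqrt{n}/2+1}{(\sqrt{n}+1)^{2}}$, while the second term contributes $\frac{8}{15n\ln 2}$. Using the crude numerical fact $15\ln 2>8$ (equivalent to $\ln 2>8/15\approx 0.533$), I can bound $\frac{8}{15n\ln 2}<\frac{1}{n}$, so
\begin{gather*}h'(n)>\frac{\sqrt{n}/2+1}{(\sqrt{n}+1)^{2}}-\frac{1}{n}=\frac{\sqrt{n}(n-4)-2}{2n(\sqrt{n}+1)^{2}}.\end{gather*}
The denominator is positive and the numerator $\sqrt{n}(n-4)-2$ is positive for $n\geq 25$ with substantial margin, so $h$ is strictly increasing on $[25,\infty)$.

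Finally, I would check the base case $n=25$ by direct computation: $h(25)=\frac{25}{6}-\frac{8\ln 25}{15\ln 2}=\frac{25}{6}-\frac{16\log_{2}5}{15}$, which is approximately $4.17-2.48>0.9>0$. Combined with the monotonicity step, this gives $h(n)>0$ for all $n\geq 25$, and unwinding the equivalence recovers the claimed inequality.

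There is no real obstacle; the only point that requires a little care is the choice of the ``right'' function to differentiate. If one attacks the inequality head-on the derivative contains cross-terms like $\frac{\ln n}{\sqrt{n}}$ that obscure the argument, so the small trick of dividing through by $\sqrt{n}+1$ before differentiating is what makes the computation short and the sign of $h'$ transparent.
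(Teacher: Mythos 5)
Your proposal is correct and follows essentially the same route as the paper: the author also divides through by $\sqrt{n}+1$ to define $f(x)=\frac{x}{\sqrt{x}+1}-\frac{8\ln x}{15\ln 2}$, bounds $f'$ below using $8<15\ln 2$ to reach the same expression $\frac{\sqrt{x}(x-4)-2}{2x(\sqrt{x}+1)^2}$, and concludes via monotonicity plus the base case $f(25)>0.9$. No differences worth noting.
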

\begin{lemma}\label{lemma2}Let $3\leq p\leq\sqrt{n}$. Then 
\begin{gather*}\left(\frac{n}{p}-1\right)\frac{\ln p}{\ln n}\geq \frac{\sqrt{n}-1}{2}.\end{gather*}
\begin{proof}
By assumption, $p\leq \sqrt{n}$, which implies that $\ln p\leq \frac{1}{2} \ln n$. With a little patience we can see that this is equivalent to 
\begin{gather*}\left(\frac{n}{p}-1\right)\frac{\ln p}{\ln n}\geq \frac{n\ln p}{p\ln n}-\frac{1}{2},\end{gather*}
therefore it suffices to prove that $\frac{n\ln p}{p\ln n}\geq \frac{\sqrt{n}}{2}$. We observe that the function 
\begin{gather*}f(x)=\frac{\ln x}{x}\end{gather*}
is strictly decreasing for $x\geq e$, since $f'(x)=\frac{1-\ln x}{x^2}$. But $e<3\leq p\leq \sqrt{n}$, therefore 
\begin{gather*}f(p)\geq f(\sqrt{n})\Leftrightarrow \frac{\ln p}{p}\geq \frac{\ln n}{2\sqrt{n}}.\end{gather*}
Finally, we multiply both sides by $\frac{n}{\ln n}$, to obtain 
\begin{gather*}\frac{n\ln p}{p\ln n}\geq \frac{n}{\ln n}\cdot\frac{\ln n}{2\sqrt{n}} =\frac{\sqrt{n}}{2}.\end{gather*}
This completes the proof.
\end{proof}
\end{lemma}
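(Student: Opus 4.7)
The plan is to reduce the stated inequality to a direct comparison of two values of the function $f(x) = \tfrac{\ln x}{x}$, which I can then handle with a standard monotonicity argument.

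First I would exploit the hypothesis $p \leq \sqrt{n}$, which directly gives $\ln p \leq \tfrac{1}{2}\ln n$, equivalently $\tfrac{\ln p}{\ln n} \leq \tfrac{1}{2}$. Expanding the product on the left-hand side,
\begin{gather*}
\left(\frac{n}{p}-1\right)\frac{\ln p}{\ln n} \;=\; \frac{n\ln p}{p\ln n}-\frac{\ln p}{\ln n} \;\geq\; \frac{n\ln p}{p\ln n}-\frac{1}{2}.
\end{gather*}
So it would suffice to establish the cleaner bound $\tfrac{n\ln p}{p\ln n}\geq \tfrac{\sqrt{n}}{2}$, since combining this with the $-\tfrac{1}{2}$ above yields precisely $\tfrac{\sqrt{n}-1}{2}$.

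After multiplying both sides by $\tfrac{p\ln n}{n}$, this cleaner bound is equivalent to $\tfrac{\ln p}{p}\geq \tfrac{\ln n}{2\sqrt{n}} = \tfrac{\ln\sqrt{n}}{\sqrt{n}}$, which is exactly the statement $f(p)\geq f(\sqrt{n})$ for $f(x) = \tfrac{\ln x}{x}$. A routine derivative computation gives $f'(x) = \tfrac{1-\ln x}{x^2}$, which is strictly negative on $(e,\infty)$. Since the lower hypothesis $p\geq 3 > e$ places both $p$ and $\sqrt{n}$ in the strictly decreasing regime of $f$, and since $p \leq \sqrt{n}$, monotonicity supplies $f(p)\geq f(\sqrt{n})$, closing the argument.

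I do not anticipate any serious obstacle: the only mildly clever move is the initial split that peels off the $\tfrac{1}{2}$ term so that the remaining inequality is scale-homogeneous enough to be a comparison of $f$ values. The role of the lower bound $p\geq 3$ is exactly to land in the region $x>e$ where $f$ is decreasing; without it the monotonicity step would fail.
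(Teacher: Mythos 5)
Your proof is correct and follows essentially the same route as the paper: peel off the $-\tfrac{1}{2}$ term using $\ln p \leq \tfrac{1}{2}\ln n$, reduce to $\tfrac{n\ln p}{p\ln n}\geq \tfrac{\sqrt{n}}{2}$, and conclude by the monotonicity of $f(x)=\tfrac{\ln x}{x}$ on $[e,\infty)$. No gaps.
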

\section{Main results}
In this section we state our main results and provide some key proof techniques and insights. We begin with the following:
\begin{theorem}\label{main}
Suppose that $n\geq (2c+3)^2$ and $n$ is not of the form $(c+1)p$, where $p$ is prime. Then 
\begin{equation}\label{eq5}\sum_{M\in \mathcal{N}[n-1-c]}M\equiv 0\pmod{n}.\end{equation}
\begin{proof}
The main idea of the proof is to show that for every prime power $p^a$ with $p^a\mid \mid n$, then $p^a\mid M$, for every $M\in \mathcal{N}[n-1-c]$. 
With the aim of doing this, suppose that $p$ is a prime divisor of $n$, where 
\begin{gather*}n=kp^a,\quad p^a\mid \mid n \quad \text{and}\quad p^x\leq n<p^{x+1}.\footnote{In particular we will show that $p^x\mid M$, which is even stronger.}\end{gather*}

It is important to note that $n=kp^a$ and $n\neq (c+1)p$, imply that either $a\geq 2$ or $k\neq c+1$; thus, it suffices to consider the following cases:
 \begin{case 1*}\emph{$a\geq 2$.}\end{case 1*}
Recall that \eqref{eq2} states
\begin{gather*}\sum_{M\in \mathcal{N}[n-1-c]}M=\sum_{M\in \mathcal{N}[c]}\frac{(n-1)!}{M}.\end{gather*}
Every $M\in \mathcal{N}[c]$ is a product of $c$ positive integers less than $n$. Thus, the largest power of $p$ that divides $M$ is at most $p^{cx}$. Since $\nu_p\left((n-1)!\right)$ is the exponent of the largest power of $p$ that divides $(n-1)!$, it will be enough to show that 
\begin{equation}\label{eq6}\nu_p\left((n-1)!\right)-cx\geq a.\end{equation}
Let $p\geq 3$. The assumption $a\geq 2$ implies $p\leq \sqrt{n}$, hence, applying Lemma \eqref{lemma2} we get 
\begin{gather*}\left(\frac{n}{p}-1\right)\frac{\ln p}{\ln n}\geq \frac{\sqrt{n}-1}{2}\geq c+1.\end{gather*}
The last part of the inequality comes from the fact that $n\geq (2c+3)^2$. It follows that
\begin{equation}\label{eq7}\frac{n}{p}-1\geq (c+1)\frac{\ln n}{\ln p}\geq (c+1)x,\end{equation}
because by assumption, $p^x\leq n$ which implies $x\leq\frac{\ln{n}}{\ln{p}}$. Legendre's formula yields 
\begin{gather*}\nu_p\left((n-1)!\right)\geq \left\lfloor\frac{n-1}{p}\right\rfloor=\frac{n}{p}-1,\end{gather*}
since $p\mid n$. From \eqref{eq7} and the previous inequality it follows that $\nu_p\left((n-1)!\right)\geq (c+1)x$, which implies that $\nu_p\left((n-1)!\right)-cx\geq x\geq a$. Therefore, \eqref{eq6} is proved.\par
If $p=2$, we can prove \eqref{eq6} fairly easily (even if $a<2$): From Legendre's formula and property \eqref{item3}, 
\begin{equation}\label{eq8}\nu_2\left((n-1)!\right)=\left\lfloor\frac{n-1}{2}\right\rfloor+\cdots+\left\lfloor\frac{n-1}{2^x}\right\rfloor\geq\frac{n}{2}-1+\cdots +\frac{n}{2^x}-1=n\cdot\frac{2^x-1}{2^x}-x.\end{equation}
It is easy to check that $n\geq(2c+3)^2$ and $c\geq 1$ imply that $n\geq 25$. By assumption, $2^x\leq n<2^{x+1}$, thus $x\geq 4$ and consequently, $\frac{2^x-1}{2^x}\geq \frac{15}{16}$. Therefore, \eqref{eq8} yields
\begin{equation}\label{eq9}\nu_2\left((n-1)!\right)\geq \frac{15n}{16}-x.\end{equation} 
Moreover, $n\geq(2c+3)^2$ and $2^x\leq n$ are equivalent to $c+2\leq \frac{\sqrt{n}+1}{2}$ and $x\leq \frac{\ln x}{\ln 2}$, respectively. Thus, from Lemma \eqref{theorem1} we obtain 
\begin{gather*}\frac{15n}{16}>\frac{(\sqrt{n}+1)\ln n}{2\ln 2}\geq (c+2)\frac{\ln n}{\ln 2}\geq (c+2)x,\end{gather*}
which implies that $\frac{15n}{16}-x-cx\geq x$. From \eqref{eq9}, it follows that $\nu_2\left((n-1)!\right)-cx\geq x\geq a$, which completes the proof of the first case.
\begin{case 2*}\emph{$a=1, k\neq c+1$.}\end{case 2*}
Recall that from \eqref{eq3},
\begin{gather*}\sum_{M\in \mathcal{N}[n-1-c]}M\equiv\sum_{M\in \mathcal{N}_1[c]}\frac{(n-1)!}{M}\pmod{p}.\end{gather*}
Since $a=1$, then $n=kp$. Thus, there are $\left\lfloor\frac{n-1}{p}\right\rfloor=k-1$ multiples of $p$ less than $n$.\par
If $k>c+1$, then $k-1>c$. Thus, by definition, $\mathcal{N}_1[c]=\emptyset$. Hence,
\begin{gather*}\sum_{M\in \mathcal{N}[n-1-c]}M\equiv0\pmod{p}.\end{gather*}
If $k<c+1$, then $k-1<c$. For every $M\in \mathcal{N}_1[c]$, $\frac{(n-1)!}{M}$ is not divisible by $p$, since the $k-1$ multiples of $p$ are canceled from the numerator and the denominator. In particular, if
\begin{gather*}(n-1)!_p=\prod_{\substack{p\nmid m\\m<n}}m,\end{gather*}
then
\begin{equation}\label{eq10}\sum_{M\in \mathcal{N}_1[c]}\frac{(n-1)!}{M}=\sum_{\substack{M\in \mathcal{N}[c-(k-1)]\\p\nmid M}}\frac{(n-1)!_p}{M}.\end{equation} 
A simple counting argument shows that there are $k(p-1)$ integers less than $n$, not divisible by $p$. Thus, the right-hand side of \eqref{eq10}, is equal to the sum of the products of these numbers, taken $k(p-1)-(c-(k-1))$ at a time, which is the coefficient of $x^{c-(k-1)}$ in the polynomial 
\begin{gather*}P(x)=\prod_{\substack{m\leq n-1\\p \nmid m}}(x-m)=\prod_{j=0}^{k-1}\left(x-(jp+1)\right)\cdots\left(x-(jp+p-1)\right).\end{gather*}
But 
\begin{gather*}\left(x-(jp+1)\right)\cdots\left(x-(jp+p-1)\right)\equiv (x-1)\cdots (\left(x-(p-1)\right)\pmod{p},\end{gather*}
thus, \eqref{eq2}, yields
\begin{gather*}P(x)\equiv(x^{p-1}-1)^k=\sum_{i=0}^{k}\binom{k}{i}(-1)^ix^{(p-1)(k-i)}\pmod{p}.\end{gather*}
Therefore, in order to show that $\displaystyle \sum_{M\in \mathcal{N}_1[c]}\frac{(n-1)!}{M}\equiv 0\pmod{p}$, it suffices to show that the coefficient of $x^{c-(k-1)}$ vanishes. Since the only coefficients which are (possibly) non-zero modulo $p$ correspond to the power $x^{(p-1)(k-i)}$, it suffices to show that $c-(k-1)\neq (p-1)(k-i)$.\par
Aiming for a contradiction, we suppose that $c-(k-1)=(k-i)(p-1)$. This implies that $p-1\mid c-k+1$, hence, $p-1\leq c-k+1$. But $k\geq 1$, thus $p\leq c+1$. We combine this with the hypothesis that $k<c+1$ and $n\geq (2c+3)^2$, to conclude 
\begin{gather*}(2c+3)^2\leq n=kp<(c+1)^2\end{gather*}
which is absurd since $c\geq 1$. The proof is complete.
\end{proof}
\end{theorem}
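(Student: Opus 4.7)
The plan is to reduce the congruence modulo $n$ to congruences modulo each prime power $p^a \mid\mid n$ via the Chinese Remainder Theorem, and then show that for each such $p^a$, the sum $\sum_{M\in\mathcal{N}[n-1-c]} M$ vanishes modulo $p^a$. Using the identity \eqref{eq2}, it is equivalent to study $\sum_{M\in\mathcal{N}[c]} \frac{(n-1)!}{M}$. Since the hypothesis $n\neq (c+1)p$ together with the factorization $n = kp^a$ forces either $a \geq 2$ or $a = 1$ with $k \neq c+1$, I would split the argument into exactly these two cases.

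For the first case $a\geq 2$, I would try to show the much stronger claim that every individual term $\frac{(n-1)!}{M}$ is divisible by $p^a$. Writing $p^x \leq n < p^{x+1}$, each $M \in \mathcal{N}[c]$ is a product of $c$ integers in $\{1,\ldots,n-1\}$, so $\nu_p(M) \leq cx$, and it suffices to show $\nu_p((n-1)!) \geq a + cx$. For odd $p$, the condition $a \geq 2$ gives $p \leq \sqrt{n}$, so Lemma \ref{lemma2} together with the hypothesis $n \geq (2c+3)^2$ would produce the estimate $\frac{n}{p}-1 \geq (c+1)x$, which via Legendre's formula yields the desired bound since $x \geq a$. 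For $p = 2$, the argument $a \geq 2$ is not required; instead Legendre's formula combined with property (3) of the floor function gives a lower bound of the form $\frac{15n}{16} - x$, and Lemma \ref{theorem1} then suffices to conclude.

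The second case $a = 1$, $k \neq c+1$ is where I expect the real work to lie, since no longer every term vanishes mod $p$. Here I would exploit \eqref{eq3} to restrict attention to the submultiset $\mathcal{N}_1[c]$ of $c$-subsets that absorb every multiple of $p$ below $n$. If $k > c+1$, there are $k-1 > c$ such multiples and $\mathcal{N}_1[c]$ is empty, finishing the case. If $k < c+1$, the multiples of $p$ cancel between numerator and denominator, and the sum rewrites as an extraction of the coefficient of $x^{c-(k-1)}$ from the polynomial $P(x) = \prod_{m<n,\,p\nmid m}(x-m)$. The main obstacle is then a mod-$p$ identification: using \eqref{eq4}, $P(x) \equiv (x^{p-1}-1)^k \pmod{p}$, whose nonzero coefficients sit only in degrees $(p-1)(k-i)$. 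So I must rule out $c-(k-1) = (k-i)(p-1)$. Supposing such an equality, I would derive $p-1 \leq c - k + 1$, hence $p \leq c+1$, and combine with $k < c+1$ and $n \geq (2c+3)^2$ to reach the contradiction $(2c+3)^2 \leq kp < (c+1)^2$. This final arithmetic obstruction is the heart of the argument and the place where the quantitative hypothesis $n \geq (2c+3)^2$ is genuinely used.
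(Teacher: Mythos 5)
Your proposal is correct and follows essentially the same route as the paper's proof: the same case split ($a\geq 2$ versus $a=1$, $k\neq c+1$), the same use of Lemma \ref{lemma2} and Legendre's formula for odd $p$, Lemma \ref{theorem1} for $p=2$, and the same coefficient-extraction argument via $(x^{p-1}-1)^k$ culminating in the contradiction $(2c+3)^2\leq kp<(c+1)^2$. Nothing further to add.
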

\begin{theorem} Let $n=(c+1)p$. Then
\begin{gather*}\left((-1)^c+\sum_{M\in\mathcal{N}[n-1-c]}M\right)\cdot(c+1)\equiv 0\pmod{n}.\end{gather*}
\begin{proof}
It suffices to prove that
\begin{gather*}(-1)^c+\sum_{M\in\mathcal{N}[n-1-c]}M\equiv 0\pmod{p}.\end{gather*}
If $n=(c+1)p$, there are exactly $c$ multiples of $p$ less than $n$, which implies that $\mathcal{N}_1[c]$ contains only one element. In particular, $\mathcal{N}_1[c]=\{p\cdot 2p\cdots cp\}$ and
\begin{gather*}\sum_{M\in \mathcal{N}_1[c]}\frac{(n-1)!}{M}=(n-1)!_p.\end{gather*}
Using Wilson's theorem we obtain
\begin{gather*}(n-1)!_p=\prod_{j=0}^{c}(jp+1)\cdots(jp+p-1)\equiv \left((p-1)!\right)^{c+1}\equiv (-1)^{c+1}\pmod{p}.\end{gather*}
Thus,
\begin{gather*}\sum_{M\in \mathcal{N}[n-1-c]}M\equiv(-1)^{c+1}\pmod{p}.\end{gather*}
Consequently,
\begin{gather*}\left((-1)^c+\sum_{M\in \mathcal{N}[n-1-c]}M\right)\cdot(c+1)\equiv 0\pmod{n},\end{gather*}
which completes the proof.
\end{proof}
\end{theorem}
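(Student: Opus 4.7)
The plan is to reduce the stated divisibility by $n=(c+1)p$ to a congruence modulo $p$, and then evaluate that congruence explicitly using Wilson's Theorem. Since the factor $(c+1)$ on the left already accounts for the $(c+1)$-part of $n$, it suffices to show that $(-1)^c + \sum_{M\in\mathcal{N}[n-1-c]} M \equiv 0 \pmod{p}$. From here the strategy is to isolate the non-vanishing contribution to the sum modulo $p$ via \eqref{eq3}.

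For the core step I would invoke \eqref{eq3} to replace $\sum_{M\in\mathcal{N}[n-1-c]} M$ modulo $p$ by $\sum_{M\in \mathcal{N}_1[c]} \frac{(n-1)!}{M}$. The crucial structural observation is that when $n=(c+1)p$, the multiples of $p$ strictly less than $n$ are exactly $p, 2p, \ldots, cp$, i.e.\ precisely $c$ numbers. By the very definition of $\mathcal{N}_1[c]$, every $M \in \mathcal{N}_1[c]$ must be divisible by each of these $c$ multiples, yet $M$ is itself a product of $c$ distinct integers from $\{1,\ldots,n-1\}$; hence $\mathcal{N}_1[c]$ collapses to the single element $\{p\cdot 2p\cdots cp\}$. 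Dividing $(n-1)!$ by this product yields $(n-1)!_p$, the product of the integers less than $n$ coprime to $p$.

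Finally I would evaluate $(n-1)!_p$ modulo $p$ by grouping its factors into the $c+1$ consecutive blocks $(jp+1)(jp+2)\cdots(jp+p-1)$ for $j=0,1,\ldots,c$. Since each factor $jp+k$ reduces to $k$ modulo $p$, each block is congruent to $(p-1)! \equiv -1 \pmod{p}$ by Wilson's Theorem, and the whole product becomes $\left((p-1)!\right)^{c+1} \equiv (-1)^{c+1} \pmod{p}$. Adding $(-1)^c$ gives zero, as required. I do not foresee any genuine obstacle in this argument; the delicate point is just the exact count of multiples of $p$ below $n=(c+1)p$, which is what makes $\mathcal{N}_1[c]$ degenerate to a single element and replaces the estimates of Theorem \ref{main} by an exact evaluation.
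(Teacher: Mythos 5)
Your proposal is correct and follows essentially the same route as the paper: reduce to a congruence modulo $p$, apply \eqref{eq3}, observe that the exact count of $c$ multiples of $p$ below $n=(c+1)p$ forces $\mathcal{N}_1[c]=\{p\cdot 2p\cdots cp\}$, and evaluate $(n-1)!_p\equiv\left((p-1)!\right)^{c+1}\equiv(-1)^{c+1}\pmod{p}$ by Wilson's Theorem. There is nothing to add.
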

In this way, we obtain the following corollary:
\begin{corollary} If $n\geq (2c+3)^2$, then 
\begin{gather*}\left((-1)^c+\sum_{M\in \mathcal{N}[n-1-c]}M\right)\cdot(c+1)\equiv 0\pmod{n},\end{gather*}
if and only if $n=(c+1)p$, where $p$ is prime.
\begin{proof}
If $c=0$, the claim is equivalent to Wilson's Theorem. Therefore, we assume that $c\geq 1$ and it suffices to show that the above congruence does not hold if $n\neq (c+1)p$. We observe that if $n\neq (c+1)p$, Theorem \eqref{main} implies that
\begin{gather*}\left((-1)^c+\sum_{M\in \mathcal{N}[n-1-c]}M\right)\cdot(c+1)\equiv (-1)^c\cdot (c+1)\pmod{p^a},\end{gather*}
for every $p^a\mid\mid n$. This is equivalent to $(-1)^c\cdot(c+1)\equiv 0\pmod{n}$ and consequently, $n\leq c+1$. This is absurd, since $n\geq (2c+3)^2$. This completes the proof and the paper.
\end{proof}
\end{corollary}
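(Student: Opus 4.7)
The plan is to reduce the corollary to the two preceding theorems, so the argument should be quite short. First I would dispense with the trivial case $c=0$ separately: there the statement collapses to $1+(n-1)!\equiv 0\pmod n$, which is Wilson's Theorem, so I may assume $c\geq 1$ henceforth.

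For the ``if'' direction, the hypothesis $n=(c+1)p$ with $p$ prime is precisely the setting of the theorem immediately preceding the corollary, so the conclusion follows at once.

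For the ``only if'' direction, I would argue by contradiction. Suppose that $n\geq(2c+3)^2$ and that $n$ is \emph{not} of the form $(c+1)p$. Then Theorem \ref{main} supplies $\sum_{M\in\mathcal{N}[n-1-c]}M\equiv 0\pmod n$, so modulo $n$ the quantity in question reduces to $(-1)^c(c+1)\cdot 1=(-1)^c(c+1)$. For this residue to be divisible by $n$ one would need $n\mid (c+1)$, hence $n\leq c+1$; this flatly contradicts $n\geq(2c+3)^2$ for every $c\geq 0$, completing the contradiction.

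The main obstacle has been entirely absorbed into Theorem \ref{main}; once that congruence is in hand, the corollary is a one-line deduction. The only real content beyond citing the earlier results is the trivial size comparison $(2c+3)^2>c+1$, together with the conceptual observation that after Theorem \ref{main} eliminates the enormous sum, the surviving residue $(-1)^c(c+1)$ is far too small to be annihilated by the modulus $n$. In particular, no further prime-by-prime lifting is needed, because Theorem \ref{main} is already formulated modulo $n$ rather than modulo individual prime powers dividing $n$.
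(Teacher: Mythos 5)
Your proposal is correct and follows essentially the same route as the paper: the ``if'' direction is the preceding theorem, and the ``only if'' direction combines Theorem \ref{main} with the observation that $n\geq(2c+3)^2>c+1$ forces $n\nmid(c+1)$. The only cosmetic difference is that you apply Theorem \ref{main} directly modulo $n$, whereas the paper phrases the same step prime power by prime power before recombining.
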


\makeatletter
\renewcommand{\@biblabel}[1]{[#1]\hfill}
\makeatother

\end{document}